\renewcommand{\implies}{\Rightarrow}
\renewcommand{\iff}{\Leftrightarrow}
\newcommand{\ox}{\otimes}
\DeclareMathOperator{\Trd}{Trd} 
 \DeclareMathOperator{\tr}{tr}
 \DeclareMathOperator{\sig}{sig}
\newcommand{\<}{\langle}
\renewcommand{\>}{\rangle}
\newcommand{\x}{\times}
\DeclareMathOperator{\ind}{ind}
\DeclareMathOperator{\pind}{pind}
\newcommand{\Q}{\mathbb{Q}}
\newcommand{\R}{\mathbb{R}}
\newcommand{\N}{\mathbb{N}}
\newcommand{\vf}{\varphi}
\newcommand{\vt}{\vartheta}
\newcommand{\ga}{\alpha}
\newcommand{\s}{\sigma}
\newcommand{\ad}{\mathrm{ad}}
\newcommand{\id}{\mathrm{id}}
\newtheorem{lemma}{Lemma}[section]
\newtheorem{theorem}[lemma]{Theorem}
\newtheorem{prop}[lemma]{Proposition}
\theoremstyle{definition}
\theoremstyle{remark}
\newtheorem{remark}[lemma]{Remark}
\title{A hermitian analogue of the Br\"ocker--Prestel theorem}
\author{Vincent Astier}
\author{Thomas Unger}
\address{Fachbereich Mathematik und Statistik\\
Universit\"at Konstanz\\
D-78457 Konstanz\\ Germany}\email{vincent.astier@uni-konstanz.de}
\address{School of Mathematical Sciences\\ University College Dublin\\ Belfield\\
Dub\-lin~4\\ Ireland} \email{thomas.unger@ucd.ie}
\keywords{Central simple algebras,
involutions, quadratic and hermitian forms, local-global principles}
\subjclass[2000]{16K20, 11E39}
\begin{document}

\begin{abstract} The Br\"ocker--Prestel local-global principle characterizes weak iso\-tropy of quadratic forms over a formally real field in terms of weak isotropy over the henselizations and isotropy over the real closures of that field. A hermitian analogue of this principle is presented for algebras of index at most two. An improved result is also presented for algebras with a decomposable involution, algebras of pythagorean index at most two,  and algebras over SAP and ED fields.
\end{abstract}

\maketitle

\section{Introduction}

In the algebraic theory of quadratic forms over fields the problem of determining whether a form is isotropic (i.e., has a non-trivial zero) has led to the development of several powerful local-global principles. They  allow one to test the isotropy of a form over the original field (``global'' situation) by testing it over a collection of other fields where the original problem is potentially easier to solve  (``local'' situation).

The most celebrated local-global principle is of course the Hasse--Minkowski theorem which 
gives a test for isotropy  over the rational numbers $\Q$ in terms of isotropy over the $p$-adic numbers $\Q_p$ for each prime $p$ and the real numbers $\R$. More generally, $\Q$ may be replaced by any global field $F$ and the collection $\bigl\{\{\Q_p\}_{p\text{ prime}},\ \R\bigr\}$ may be replaced by the collection of local fields $\{F_v\}$ (completions of $F$ with respect to all valuations $v$ on $F$). (See, for example, \cite[p.~170]{Lam}.)

Another landmark theorem in the theory of quadratic forms is Pfister's local-global principle. Here  a formally real field $F$ plays the role of ``global'' field, whereas the corresponding ``local'' fields are given by the collection of real closures $\{F_P\}$ where $P$ runs through the orderings of $F$. Pfister's local-global principle then says that a quadratic form $q$ is weakly hyperbolic over $F$ if and only if it is hyperbolic over $F_P$ for each ordering $P$ on $F$  or, in more familiar terminology, $q$ is a torsion form in the Witt ring $W(F)$ if and only if the signature of $q$ is zero for each ordering $P$ on $F$. (See, for example, \cite[p.~253]{Lam}.)

In this paper we are interested in a local-global principle for weak isotropy, due to Br\"ocker \cite{Bro} and Prestel \cite[Theorem~8.12]{P}. The role of  ``global'' field is again played  by a formally real field $F$, but this time the ``local'' fields are made up of the collection $\{F_v^H\}$ of henselizations of $F$ with respect to real valuations $v$ on $F$ (i.e., valuations with formally real residue field) together with  the collection $\{F_P\}$ of real closures of $F$ at orderings $P$ of $F$:

\begin{theorem}[Br\"ocker--Prestel]\label{BPThm}  Let $F$ be a formally real field  and let $q$ be a nonsingular quadratic form over $F$. Then $q$ is weakly isotropic over $F$ if and only if  the following conditions are satisfied:
\begin{enumerate}[\rm(i)]
\item $q\ox F_v^H$ is weakly isotropic for every real valuation $v$ on $F$;
\item $q \ox F_P$ is isotropic for every ordering $P$ on $F$.
\end{enumerate}
\end{theorem}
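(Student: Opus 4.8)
The plan is to prove the two implications separately, the ``only if'' direction being routine and all the work concentrated in the ``if'' direction. For ``only if'': if $q$ is weakly isotropic over $F$, say $n\times q$ is isotropic, then $n\times q$ remains isotropic over every field extension of $F$, so $q\otimes F_v^H$ is weakly isotropic for every real valuation $v$ on $F$, which is (i); and over a real closed field an anisotropic form is definite, while $n\times q$ is definite exactly when $q$ is, so isotropy of $n\times(q\otimes F_P)$ forces $q\otimes F_P$ to be isotropic, which is (ii). (One may keep in mind throughout the useful reformulation that, for any field $K$, the form $q$ is weakly isotropic over $K$ if and only if $q$ is isotropic over the Pythagorean closure $K_{\mathrm{pyth}}$.)

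For the ``if'' direction I would argue by contraposition: suppose $q$ is \emph{not} weakly isotropic --- equivalently, $q$ is \emph{strongly anisotropic}, i.e.\ $n\times q$ is anisotropic for every $n$ --- and deduce that (i) or (ii) fails. The first move is an easy dichotomy. If $q$ is positive or negative definite at some ordering $P$ of $F$, then $q\otimes F_P$ is anisotropic and (ii) fails; so I may assume that $q$ is totally indefinite. Then $q\otimes F_P$ is isotropic at every ordering $P$, so (ii) holds, and the whole problem reduces to exhibiting a \emph{nontrivial} real valuation $v$ on $F$ such that $q\otimes F_v^H$ is again strongly anisotropic --- that is, such that (i) fails. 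This is the real content of the theorem: a strongly anisotropic totally indefinite form must carry a genuine valuation-theoretic obstruction to weak isotropy, over and above the order-theoretic obstructions already recorded by (ii).

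To locate $v$ I would pass to the reduced theory of quadratic forms over the formally real field $F$. Writing $q\cong\langle a_1,\dots,a_n\rangle$, strong anisotropy of $q$ is equivalent to the nonexistence of a relation $a_1s_1+\dots+a_ns_n=0$ with the $s_i$ sums of squares in $F$, not all zero; I would encode this in a proper preordering-type object $T$ of $F$, containing $\sum F^2$, that certifies the strong anisotropy of $q$. Now Br\"ocker's structure theory enters: every preordering is an intersection of fans, so $T$ is refined by a proper fan $T'$ that still certifies strong anisotropy of $q$, and because $q$ is totally indefinite the fan $T'$ has more than two associated orderings; by Br\"ocker's trivialization theorem such a fan comes from a valuation $v$ of $F$, in the sense that $T'$ is the preimage, under the place attached to $v$, of the preordering $\sum F_v^2$ of sums of squares of the residue field $F_v$. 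One then checks that $v$ is real --- $F_v$ is formally real because the fan $T'$ is proper --- and that $v$ is nontrivial --- a trivial $v$ would give $T'=\sum F^2$ and hence, by the construction of $T$, force $q$ to be weakly isotropic. Finally, since $F_v^H$ is henselian, Springer's theorem decomposes $q\otimes F_v^H$ into its residue forms over $F_v$, and the data packaged in $T'$ forces those residue forms, and hence $q\otimes F_v^H$, to be strongly anisotropic; thus (i) fails and the contraposition is complete.

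The step I expect to be genuinely hard is the core of the previous paragraph: finding the correct preordering-theoretic translation of strong anisotropy of a (possibly non-Pfister) form, and then converting the resulting fan/valuation data back into strong anisotropy of $q$ over $F_v^H$ through a Springer-type analysis of $W(F_v^H)$. This is exactly where the deep input of the reduced theory is needed --- Br\"ocker's results on fans, their trivialization, and the reduced stability index \cite{Bro}, as organized in \cite[Theorem~8.12]{P} by means of chains of orderings and compatible valuations --- while the remaining steps are essentially bookkeeping.
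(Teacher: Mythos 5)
The paper does not prove this statement: Theorem~\ref{BPThm} is quoted as known background, with attribution to Br\"ocker \cite{Bro} and Prestel \cite[Theorem~8.12]{P}, and the paper's own contribution begins with the hermitian analogue, Theorem~\ref{main}. There is therefore no internal proof to compare against, so let me assess your sketch on its own terms.

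The easy direction and the dichotomy (definite at some ordering versus totally indefinite) are fine, and the overall architecture of the hard direction --- encode strong anisotropy algebraically, extract a nontrivial real valuation from the structure theory, then transfer strong anisotropy to $F_v^H$ via a Springer-type analysis --- is indeed the architecture of Prestel's proof. But the encoding you propose does not work. Strong anisotropy of a general form is not detected by \emph{preorderings} or \emph{fans}: these are multiplicatively closed, whereas the condition ``$a_1s_1+\cdots+a_ns_n\neq 0$ for all $s_i\in\sum F^2$ not all zero'' is additive in the entries $a_i$ and has no multiplicative structure. A concrete obstruction: take $q=\langle 1,a,b,-ab\rangle$ over a non-SAP field with $a,b$ chosen so that $q$ is strongly anisotropic; the preordering generated by $1,a,b,-ab$ contains $a\cdot b\cdot(-ab)=-a^2b^2$, hence (multiplying by the square $(ab)^{-2}$) contains $-1$, so it is improper and certifies nothing. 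The correct object is a quadratic \emph{semi-ordering} $Q$ in Prestel's sense (closed under addition and under multiplication by squares, with $Q\cup -Q=F$ and $Q\cap -Q=\{0\}$, but not closed under multiplication). The two structure theorems you actually need are: $q$ is strongly anisotropic if and only if it is $Q$-definite for some semi-ordering $Q$; and a semi-ordering which is not an ordering is compatible with a nontrivial valuation whose residue field is formally real. Your appeal to ``every preordering is an intersection of fans'' also would not serve even if a suitable preordering existed: orderings are themselves fans, so a fan refining $T$ might simply be an ordering, which loses the valuation-theoretic content entirely. So while the strategy is right, the step you yourself flag as the hard one --- translating strong anisotropy into valuation data and back --- currently has a genuine gap: you are reaching for fans and preorderings where Prestel's argument uses semi-orderings and their compatibility with valuations.
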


It is possible to state this theorem with weaker local conditions. For instance,  it suffices to consider only archimedean orderings, cf. \cite[Theorem~8.13]{P} or \cite[Theorem~6.3.1]{EP}.

Quadratic form theory has proved very useful in providing tools for studying central simple algebras with an involution and hermitian forms over division algebras, which can be thought of as ``twisted'' versions of quadratic form theory.
One can find ample examples of this phenomenon in the book \cite{BOI} and in the recent literature.

Given the usefulness of local-global principles in quadratic form theory it  seems natural to try to develop them in more generality.  The second author has been involved in several such projects \cite{LSU, LU, LUV}. In this paper we extend the Br\"ocker--Prestel local-global principle to algebras of index at most $2$ with an involution of the first kind over formally real fields.

The structure of this paper is as follows: in Section~\ref{prelim} we recall the basic facts about quadratic and hermitian forms and algebras with involution that will be needed. In Section~\ref{decomp} we show that, for central simple algebras of arbitrary index with decomposable involution of the first kind,  the problem at hand reduces to the generalized version of Pfister's local-global principle in \cite{LU}.
In Section~\ref{mainsec} we prove the hermitian version of the Br\"ocker--Prestel theorem for algebras of index at most $2$.  In Section~\ref{pindcase} we enlarge the class of algebras for which the theorem holds to those that become of index at most $2$ over the pythagorean closure of their centre. In Section~\ref{SAPcase} we show that the theorem holds for arbitrary central simple algebras with an involution of the first kind over a SAP field. Finally, in Section~\ref{EDcase}  we show that over an ED field the theorem reduces to the local-global principle studied in \cite{LSU}.

We thank Karim Becher for many discussions on this and related topics during Unger's visit to the University of Konstanz in March 2007. Unger also wishes to thank Alex Prestel for encouraging him to look at the Br\"ocker--Prestel theorem in a hermitian setting during his UCD visit in April~2004.

\section{Preliminaries}\label{prelim}

We assume that the reader is familiar with the basic notions from the theories of quadratic forms, hermitian forms and central simple algebras with an involution. We refer to the standard references \cite{Lam, Sch, BOI} for details.

Let $F$ be a field of characteristic different from $2$ and let $D$ be a central division algebra over $F$ equipped with an involution $\vt$ of the first kind. Let $\vf$ be a quadratic form over $F$ or, more generally, a hermitian or skew-hermitian form over $(D,\vt)$. The forms $\vf$ appearing in this paper are assumed to be nonsingular. In addition we exclude alternating forms over $F$ since they are hyperbolic and thus of no relevance to the problem at hand. Thus, our forms $\vf$ can always be diagonalized. 

For $n\in\N$ we denote the $n$-fold orthogonal sum 
\[\vf\perp\vf\perp\ldots\perp\vf\] 
simply by $n\x \vf$. We call $\vf$ \emph{weakly isotropic} or \emph{weakly hyperbolic} when there exists an $n\in\N$ such that $n\x \vf$ is isotropic or hyperbolic, respectively.
A weakly hyperbolic form is also called a \emph{torsion} form. A form which is not weakly isotropic is called \emph{strongly anisotropic}. 

Let $A$ be a central simple $F$-algebra, equipped with an involution $\s$ of the first kind. We call $(A,\s)$ \emph{isotropic} if there exists a nonzero $x\in A$ such that $\s(x)x=0$. The reader can verify that this is equivalent with the definition given in \cite[6.A]{BOI}. In particular, $(A,\s)$ is isotropic if there exists an idempotent $e\not=0$ in $A$ such that $\s(e)e=0$.

We call $(A,\s)$ \emph{weakly isotropic} if 
there exist nonzero $x_1,\ldots, x_n \in A$ such that 
\begin{equation}\label{wiso}
\s(x_1)x_1+\cdots+\s(x_n)x_n=0
\end{equation} 
and \emph{strongly anisotropic} otherwise (see \cite[Definition~2.2]{LSU}).

Assume now that the division algebra $D$ is Brauer equivalent to $A$. It follows from \cite[4.A]{BOI} that $\s$ is the \emph{adjoint involution} $\ad_h$ of some hermitian or skew-hermitian form $h$ over $(D,\vt)$. We then call $(A,\s)$  \emph{hyperbolic} if $h$ is a hyperbolic form. This is equivalent with the existence of a nonzero idempotent $e\in A$ such that $\s(e)=1-e$, cf. \cite[6.B]{BOI}. 

We call $(A,\s)$ \emph{weakly hyperbolic} if there exists an $n\in \N$ such that 
\[n\x (A,\s):= (M_n(F),t)\ox_F (A,\s)\] 
is hyperbolic, where $t$ denotes the transpose involution, see \cite[Definition~3.1]{LU}.

The reader can now easily verify that for $\s=\ad_h$ and for any $n\in\N$,
\begin{eqnarray} 
\nonumber n\x (A,\ad_h) &\cong &(M_n(A), \ad_{n\x h}),\\ 
n\x (A,\ad_h) \text{ is isotropic } &\iff & n\x h \text{ is isotropic} \iff \eqref{wiso} \text{ holds},\label{wiso2}\\
 \nonumber n\x (A,\ad_h) \text{ is hyperbolic } &\iff & n\x h \text{ is hyperbolic}
\end{eqnarray}
(see also \cite[Lemma~2.4]{LSU}).

Now let $F$ be a formally real field and let $q$ be a quadratic form over $F$. For an ordering $P$ on $F$ we denote the signature of $q$ at $P$ by $\sig_P q$. 

Let $(A,\s)$ be a central simple $F$-algebra with involution of the first kind. The \emph{signature of $\s$ at $P$} is defined by
\[\sig_P\s :=\sqrt{\sig_P T_\s},\]
where $T_\s$ is the \emph{involution trace form of} $(A,\s)$, which is a quadratic form on $A$ defined by 
\[T_\s(x):=\Trd_A(\s(x)x),\ \forall x\in A,\]
where $\Trd_A$ denotes the reduced trace of $A$,  see \cite[11.10]{BOI}.

We assume from now on that $F$ is a formally real field.

\section{The Decomposable Case}\label{decomp}

Let $A$ be a central simple $F$-algebra equipped with an involution $\s$ of the first kind. We call $(A,\s)$ \emph{decomposable} if we can write
\begin{equation}\label{eq1}
(A,\s)\cong (Q_1,\s_1)\ox_F \cdots \ox_F (Q_r,\s_r)\ox_F (M_s(F), t),
\end{equation}
for certain quaternion division algebras $Q_i$ with involution of the first kind $\s_i$, $1\leq i\leq r$ and $s\geq 1$. Here $t$ denotes again the transpose involution.

The following proposition shows that for such algebras the generalized version of Pfister's local-global principle \cite[Theorem~3.2]{LU} suffices to characterize weak isotropy.

\begin{prop} Let $(A,\s)$ be a decomposable algebra with involution over $F$. The following statements are equivalent:
\begin{enumerate}[\rm(i)]
\item\label{3.1i} $(A,\s)$ is weakly isotropic;
\item\label{3.1ii} $(A,\s)$ is weakly hyperbolic;
\item\label{3.1iii} $T_\s$ is weakly isotropic;
\item\label{3.1iv} $T_\s$ is weakly hyperbolic;
\item\label{3.1v}  $\sig_P\s=0$ for all orderings $P$ on $F$;
\item\label{3.1vi} $\sig_PT_\s=0$ for all orderings $P$ on $F$.
\end{enumerate}
\end{prop}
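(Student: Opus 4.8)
The plan is to prove the chain of equivalences by establishing a cycle together with a few extra links, using two main ingredients: first, a general dictionary relating the signature of the involution $\sig_P\s$ to the signature of the trace form $T_\s$ (from the Preliminaries); second, the structural hypothesis of decomposability, which exhibits $T_\s$ as a Pfister form (up to scaling) and thereby puts us in a position to invoke the generalized Pfister local--global principle \cite[Theorem~3.2]{LU} and, on the field side, the classical Pfister local--global principle for weak isotropy of Pfister forms over $F$.

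First I would record the key computation: if $(A,\s)\cong (Q_1,\s_1)\ox_F\cdots\ox_F(Q_r,\s_r)\ox_F(M_s(F),t)$ as in \eqref{eq1}, then the trace form $T_\s$ decomposes as a tensor product of the trace forms of the factors, and each $T_{\s_i}$ on a quaternion algebra is (a scalar multiple of) a $2$-fold Pfister form, while $T_t$ on $(M_s(F),t)$ is $s\x\<1\>\ox(\text{something})$ --- in any case a multiple of a Pfister form or a sum of hyperbolic planes. Consequently $T_\s$ is similar to an $m$-fold Pfister form $\pi$ (or is already weakly hyperbolic if any $\s_i$ is hyperbolic). The crucial consequences of this are: (a) $\sig_P\s=0\iff\sig_P T_\s=0\iff\sig_P\pi=0$ for every ordering $P$; and (b) for a Pfister form, weak isotropy, weak hyperbolicity, and "all signatures vanish" are all equivalent, by Pfister's local--global principle applied over $F$. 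This already yields \eqref{3.1iii}$\iff$\eqref{3.1iv}$\iff$\eqref{3.1v}$\iff$\eqref{3.1vi}.

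It then remains to tie the algebra-theoretic conditions \eqref{3.1i} and \eqref{3.1ii} into this web. For \eqref{3.1ii}$\iff$\eqref{3.1iv}: by definition $n\x(A,\s)$ is hyperbolic iff $n\x h$ is hyperbolic (with $\s=\ad_h$), and one knows that hyperbolicity of a hermitian form over a quaternion algebra with involution is detected by its trace form; more directly, $n\x(A,\s)$ hyperbolic forces $n\x T_\s=T_{n\x\s}$ to be hyperbolic, and conversely the generalized Pfister principle \cite[Theorem~3.2]{LU}, which says $(A,\s)$ is weakly hyperbolic iff $\sig_P\s=0$ for all $P$, closes the loop via \eqref{3.1v}. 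For \eqref{3.1i}$\iff$\eqref{3.1ii}: the implication \eqref{3.1ii}$\implies$\eqref{3.1i} is immediate since a weakly hyperbolic form is weakly isotropic (hyperbolic forms of positive dimension are isotropic, and $n\x h$ isotropic is \eqref{wiso} by \eqref{wiso2}); for the converse, \eqref{3.1i}$\implies$\eqref{3.1iii} because $n\x(A,\s)$ isotropic implies $n\x T_\s$ isotropic (an isotropic vector for $n\x h$ produces an isotropic vector for its trace form, as $T_{\ad_h}$ restricted to a diagonal entry recovers a scaling of the norm form paired with $h$), and then we ride \eqref{3.1iii}$\implies$\eqref{3.1iv}$\implies$\eqref{3.1ii} around the Pfister cycle.

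The main obstacle I anticipate is the careful bookkeeping in the trace-form computation --- verifying that $T_\s$ for the decomposable algebra is genuinely similar to a Pfister form, keeping track of the scalar and of the transpose factor $(M_s(F),t)$, and ensuring the "isotropic vector for $n\x h$ yields isotropic vector for $n\x T_\s$" step is valid (this uses that $\Trd_A(\s(x)x)$ is, up to the involution-trace identification, a positive-definite-type pairing at each ordering, so it cannot vanish on a nonzero vector unless the hermitian form itself degenerates after scaling). Once the Pfister structure of $T_\s$ is in hand, all remaining implications are either definitional or direct appeals to \cite[Theorem~3.2]{LU} and the classical Pfister local--global principle, so the proof should be short.
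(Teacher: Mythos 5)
Your argument is correct and matches the paper's proof in its essentials: both establish that $T_\s$ is a Pfister form via the tensor decomposition and \cite[11.6]{BOI}, then chain \eqref{3.1ii}$\implies$\eqref{3.1i}$\implies$\eqref{3.1iii} (by taking reduced traces), link \eqref{3.1iii}$\iff$\eqref{3.1iv}$\iff$\eqref{3.1v}$\iff$\eqref{3.1vi} through the Pfister structure and Pfister's local--global principle, and close the cycle with \eqref{3.1v}$\iff$\eqref{3.1ii} from \cite[Theorem~3.2]{LU}. Two minor points: the paper simply reduces to $s=1$ at the outset, which avoids your hedging over the $(M_s(F),t)$ factor and makes $T_\s$ an honest Pfister form rather than merely ``similar to'' one; and the identity you quote should read $T_{n\times\s}\cong n^2\times T_\s$ rather than $n\times T_\s$, though this has no effect on the torsion conclusion.
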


\begin{proof} Write $(A,\s)$ as in \eqref{eq1}. Since we are interested in when $(A,\s)$ is weakly isotropic or weakly hyperbolic, we may assume that $s=1$. The involution trace forms $T_{\s_i}$ are all 2-fold Pfister forms \cite[11.6]{BOI}. Since $T_\s=\bigotimes_{i=1}^r T_{\s_i}$ (this follows from the formula for the reduced trace of a tensor product), 
$T_\s$ is a Pfister form, which proves the equivalence of \eqref{3.1iii} and \eqref{3.1iv}. Obviously \eqref{3.1ii} $\implies$ \eqref{3.1i} $\implies$ \eqref{3.1iii}. By the definition of signature of an involution, 
 \eqref{3.1v} and  \eqref{3.1vi} are equivalent. 
Finally,  \eqref{3.1iv} and \eqref{3.1vi} are equivalent by Pfister's local-global principle and \eqref{3.1v} and \eqref{3.1ii} are equivalent by the generalized version of Pfister's local-global principle \cite[Theorem~3.2]{LU}.
\end{proof}

\section{Algebras of Index at Most $2$}\label{mainsec}

In this section we assume that the index of $A$ is at most $2$. This means that either $A$ is split, i.e., $A$ is isomorphic to a full matrix algebra over $F$, or $A$ is isomorphic to a full matrix algebra over a quaternion division $F$-algebra $D$.

\begin{remark}\label{remjac}  Let $a,b\in F^\times$ and let $D=(a,b)_F$ be a quaternion division algebra with quaternion conjugation $\gamma$.
The norm form of $D$ is $\<1,-a,-b,ab\>$. Let $h$ be a hermitian form over $(D,\gamma)$. Then $h\simeq \<\ga_1,\ldots, \ga_n\>$
and all $\ga_i \in F^\x$. To $h$ we can associate its trace form $q_h(x):=h(x,x)$ which is a quadratic form over $F$. It is clear that $h$ is (weakly) isotropic if and only if $q_h$ is (weakly) isotropic.  A simple computation shows that
\[q_h\simeq \<1,-a,-b,ab\>\ox  \<\ga_1,\ldots, \ga_n\>.\]
Furthermore, a well-known theorem of Jacobson \cite{Jac} says that, if $D$ is a division algebra, then the trace map 
\[\tr: S(D,\gamma)\to S(F),\ h\mapsto q_h\] 
is an injective morphism of Witt semi-groups of isometry classes of hermitian forms over $(D,\gamma)$ and quadratic forms over $F$, respectively.

When $D$ is split, $D\cong M_2(F)\cong (1,1)_F$, its norm form is hyperbolic and thus $q_h$ is always hyperbolic.
\end{remark}

\begin{theorem}\label{main}  Let $A$ be a central simple $F$-algebra of index $\leq 2$, equipped with an involution $\s$ of the first kind. Then $(A,\s)$ is weakly isotropic over $F$ if and only if  the following conditions are satisfied:
\begin{enumerate}[\rm(i)]
\item\label{loc1}  $(A\ox_F F_v^H, \s\ox\id_{F_v^H})$ is weakly isotropic for all real valuations $v$ on $F$;
\item\label{loc2} $(A\ox_F F_P, \s\ox\id_{F_P})$ is isotropic for all orderings $P$ on $F$.
\end{enumerate}
\end{theorem}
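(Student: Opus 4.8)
I would establish the equivalence by reducing its substantive direction to the Br\"ocker--Prestel theorem for quadratic forms (Theorem~\ref{BPThm}), after disposing of the easy direction by hand.

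\emph{The forward implication.} If $(A,\s)$ is weakly isotropic over $F$, an identity $\s(x_1)x_1+\cdots+\s(x_n)x_n=0$ with the $x_i\in A$ not all zero stays valid after any scalar extension, since $A\hookrightarrow A\ox_F K$; taking $K=F_v^H$ gives (i), and taking $K=F_P$ shows that $(A\ox_F F_P,\s\ox\id)$ is weakly isotropic. To upgrade this to genuine isotropy over the real closure $F_P$ I would go through the quadratic-form dictionary built below, together with the elementary fact that over a real closed field a weakly isotropic quadratic form is isotropic (an anisotropic form there is definite, hence so is each multiple $n\x q$).

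\emph{The converse.} Write $\s=\ad_h$, where by \cite[4.A]{BOI} $h$ is a hermitian form over $(D,\gamma)$ when $\s$ is symplectic and a skew-hermitian form over $(D,\gamma)$ when $\s$ is orthogonal, $D$ being the division algebra Brauer-equivalent to $A$ and $\gamma$ its canonical involution. If $D=F$ and $\s$ is symplectic then $(A,\s)$ is hyperbolic (adjoint to an alternating form) and there is nothing to prove; if $D=F$ and $\s$ is orthogonal then $h$ is a quadratic form $q$, and by \eqref{wiso2} weak isotropy of $(A,\s)$ over $F$, over each $F_v^H$ and over each $F_P$ is literally that of $q$, so conditions (i)--(ii) for $(A,\s)$ are the hypotheses of Theorem~\ref{BPThm} for $q$. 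If $D=(a,b)_F$ is quaternion division and $\s$ is symplectic, Remark~\ref{remjac} supplies the quadratic form $q_h\simeq\<1,-a,-b,ab\>\ox\<\ga_1,\ldots,\ga_m\>$ (where $h\simeq\<\ga_1,\ldots,\ga_m\>$) with, by Jacobson's theorem, $(A,\s)$ weakly isotropic $\iff q_h$ weakly isotropic; this equivalence survives scalar extension to any $F_v^H$ or $F_P$, since if $D$ splits there both $(A\ox K,\s\ox\id)$ and $q_h\ox K$ become hyperbolic, and if $D$ stays division Jacobson's theorem still applies. As over a real closure weak isotropy of $q_h$ coincides with isotropy of $q_h$ (and with isotropy of $(A\ox K,\s\ox\id)$), conditions (i)--(ii) for $(A,\s)$ become conditions (i)--(ii) of Theorem~\ref{BPThm} for $q_h$, and we conclude.

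\emph{The main obstacle} is the remaining case: $D$ quaternion division and $\s$ orthogonal, so $h=\<u_1,\ldots,u_m\>$ is skew-hermitian over $(D,\gamma)$ with the $u_k$ pure quaternions. There is now no scalar-valued trace form ($h(x,x)$ is a pure quaternion), so Remark~\ref{remjac} cannot be quoted directly; instead I would attach to $h$ a quadratic form $q_h$ over $F$ via the structure theory of skew-hermitian forms over quaternion algebras --- for instance by transferring $h$ to a form over the quadratic subfield $F(u_1)\subset D$ with its nontrivial automorphism and then passing to a quadratic form over $F$ by a trace construction, with the usual separate treatment of the places at which $D$ splits --- arranged so that weak isotropy, isotropy and scalar extension are all faithfully tracked. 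The genuinely delicate point, and the reason condition (ii) is phrased with ``isotropic'' rather than ``weakly isotropic'', is that at an ordering $P$ for which $A=D$ has degree $2$ and remains division, the algebra-with-involution $(D\ox_F F_P,\s\ox\id)$ --- the quaternions over $F_P$ with an orthogonal involution --- is always weakly isotropic but never isotropic, so one must verify that a globally weakly isotropic $(D,\s)$ of this kind necessarily has $D$ split at every real closure of $F$; granting this, conditions (i)--(ii) for $(A,\s)$ translate into conditions (i)--(ii) of Theorem~\ref{BPThm} for $q_h$, and the theorem follows (and, as noted above, the forward implication's upgrade to genuine isotropy over each $F_P$ comes out of the same dictionary).
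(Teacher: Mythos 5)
Your split and symplectic-quaternion cases track the paper's argument (Theorem~\ref{BPThm} directly in the split orthogonal case; Jacobson's trace map, Remark~\ref{remjac}, in the quaternion symplectic case). The gap is the remaining case --- orthogonal $\s$ on a quaternion division algebra --- which is precisely where the paper's proof has content that your proposal does not reproduce. You aim to build a scalar quadratic form from a skew-hermitian $h$ by a trace/transfer from $F(u_1)\subset D$, but no such construction yields an injective Witt-semigroup morphism into $W(F)$ that tracks weak isotropy and commutes with arbitrary scalar extension; this is exactly what makes Remark~\ref{remjac} special to the hermitian case and unavailable here. The paper instead invokes the index-two anisotropic splitting theorem (Dejaiffe; Parimala--Sridharan--Suresh): if $(A,\s)$ is strongly anisotropic over $F$ and $K$ is a generic splitting field of $D$, then $(A\ox_F K,\s\ox\id_K)$ stays strongly anisotropic. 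Over $K$ the algebra is split, so the already-established split case supplies a real valuation $w$ on $K$ or an ordering $P$ on $K$ witnessing local (strong) anisotropy; one then descends via $F_v^H\subset K_w^H$ (with $v=w|_F$), resp.\ $F_{P'}\subset K_P$ (with $P'=P\cap F$). As the paper notes after the theorem, this descent automatically produces only orderings at which $D$ splits (because $K$ does), which is exactly what dissolves the $D$-admissibility worry you raise.

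The fix you propose for that worry --- that a weakly isotropic $(D,\s)$ with $D$ quaternion division and $\s$ orthogonal must have $D$ split over every real closure of $F$ --- is false. Take $F=\R$, $D=\HH$, $\gamma$ quaternion conjugation, and $\s=\Int(i)\circ\gamma$ (an orthogonal involution). Then $\s(1)=1$, $\s(j)=j$, and $\s(1)\cdot 1+\s(j)\cdot j = 1+j^2 = 0$, so $(D,\s)$ is weakly isotropic even though $D=\HH$ does not split over $\R$. The same example shows that ``upgrading weak isotropy to isotropy over $F_P$ via the dictionary'' does not go through in the orthogonal quaternion case, so your forward implication also has a hole precisely there (note the paper's own proof only argues the converse, by contraposition). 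Without the anisotropic-splitting input for index two, the orthogonal quaternion case is not handled, and that is the heart of the theorem.
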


\begin{proof}  We treat the split case first: assume that $A$ is a full matrix algebra over $F$, $A=M_\ell(F)$.
If $\s$ is symplectic, then ($\ell$ is even and) $\s$ is adjoint to a skew-symmetric bilinear form over $F$ and is thus necessarily hyperbolic. Hence the statement is trivially true in this case.
If $\s$ is orthogonal, then $\s$ is adjoint to a quadratic form over $F$ and the result follows immediately from Theorem~\ref{BPThm}.

Next, assume that $\ind(A)=2$, i.e. that  $A=M_n(D)$, with $D$ a  quaternion division algebra over $F$. We will prove the nontrivial direction by contraposition. Thus, assume that $(A,\s)$ is strongly anisotropic. 

Let $\s$ be orthogonal. Then $\s=\ad_h$, where $h$ is a skew-hermitian form over $(D,\gamma)$. Let $K$ be a generic splitting field of $D$. It follows from \cite[Corollaire]{Dej} or \cite[Corollary~3.4]{PSS} that $(A,\s)$ remains strongly anisotropic after scalar extension to $K$.  But $A_K:=A\ox_F K$ is split. Thus, by the split case, there exists a real valuation $w$ on $K$ such that 
\[(A_K\ox_K K_w^H, \s_K\ox \id_{K_w^H})\cong (A\ox_F K_w^H, \s\ox\id_{K_w^H})\] 
is strongly anisotropic, or there exists an ordering $P$ on $K$ such that $(A_K\ox_K K_P, \s_K\ox \id_{K_P})$ is anisotropic.

If there is a real valuation $w$ on $K$ such that $(A\ox_F K_w^H, \s\ox\id_{K_w^H})$ is strongly anisotropic, let $v$ be the restriction of $w$ to $F$ and note that $F_v^H\subset K_w^H$. Consider the commutative diagram:
\begin{equation*}
\begin{aligned}[c] 
\xymatrix{
(A,\s) \ar[r]^--{\ox K} \ar[d]^--{\ox F_v^H}& (A\ox_F K, \s\ox \id_K)\ar[d]^--{\ox K_w^H}\\
(A\ox_F F_v^H,\s\ox\id_{F_v^H}) \ar[r]^--{\ox K_w^H} & (A\ox_F K_w^H, \s\ox\id_{K_w^H})\\
}
\end{aligned}
\end{equation*}
Now $(A\ox_F F_v^H,\s\ox\id_{F_v^H})$ has to be strongly anisotropic, for otherwise we would get a contradiction after scalar extension to $K_w^H$. 

This argument can be reproduced in the ordering case by considering real closures instead of henselizations.

Finally, suppose that $\s$ is symplectic. Then $\s=\ad_h$, where $h$ is a hermitian form over $(D,\gamma)$. By assumption $(A,\s)=(M_n(D), \ad_h)$ is strongly anisotropic. By \eqref{wiso2} this will be the case if and only if $h$ is strongly anisotropic over $(D,\gamma)$. By Remark~\ref{remjac} this will be the case if and only if the quadratic form $q_h$ is strongly anisotropic over $F$. Thus, by Theorem~\ref{BPThm}, 
there exists a real valuation $v$ on $F$ such that $q_h\ox F_v^H$ is strongly anisotropic over $F_v^H$, or there exists an ordering $P$ on $F$ such that $q\ox F_P$ is anisotropic over $F_P$.

If there exists a real valuation $v$ on $F$ such that $q_h\ox F_v^H$ is strongly anisotropic over $F_v^H$, consider the following commutative diagram:
\begin{equation*}
\begin{aligned}[c] 
\xymatrix{
S(D,\gamma) \ar[r]^--{\tr} \ar[d]^--{\ox F_v^H}& S(F)\ar[d]^--{\ox F_v^H}\\
S(D\ox_F F_v^H,\gamma\ox\id_{F_v^H}) \ar[r]^--{\tr} & S(F_v^H)\\
}
\end{aligned}
\end{equation*}
Since $q_h\ox F_v^H=q_{h\ox F_v^H}$, we obtain that $h\ox F_v^H$ is strongly anisotropic over $(D\ox_F F_v^H, \gamma\ox\id_{F_v^H})$. We observe that $D\ox_F F_v^H$ remains a division algebra, since otherwise $q_h\ox F_v^H$ would be  hyperbolic by Remark~\ref{remjac}, which would  contradict the strong anisotropy of $q_h\ox F_v^H$.
By \eqref{wiso2} we can conclude that $(M_n(D)\ox_F F_v^H, \ad_{h\ox F_v^H})$ is strongly anisotropic.

This argument can be repeated when there exists an ordering $P$ on $F$ such that $q\ox F_P$ is anisotropic upon replacing $F_v^H$ by $F_P$.
\end{proof}

Let $D$ be a division algebra, finite-dimensional over its centre $F$.
Morandi showed that a valuation $v$ on $F$ extends to $D$ if and only if $D\ox_F F_v^H$ is a division algebra \cite[Theorem~2]{Mor}. When this happens, we call the valuation $D$-\emph{admissible}. In a similar vein we call and ordering $P$ on $F$ $D$-\emph{admissible} if $D\ox_F F_P$ is a division algebra.

When $A=M_n(D)$ with $D$ a quaternion division algebra, the notion of $D$-admissibility can be used to restrict the set of local conditions needed to test for weak isotropy in  Theorem~\ref{main}:

In case $\s$ is symplectic, only $D$-admissible orderings and valuations are needed (as observed in the proof of the theorem). 

In case $\s$ is orthogonal we do not know whether a set of valuations smaller than the one provided by our use of the classical Br\"ocker--Prestel theorem 
would suffice. The set of orderings however is allowed to only consist of non-$D$-admissible ones. This can be seen as follows: since $K$ splits $D$, the norm form $N$ of $D$ becomes isotropic and thus hyperbolic over $K$. Consequently, $\sig_P N\ox K=0$ for all orderings $P$ on $K$ and hence $\sig_{P'} N=0$ where $P'$ denotes the restriction of $P$ to $F$ (since signatures do not change under scalar extension). Therefore, $N\ox {F_{P'}}$ is hyperbolic and $D\ox_F F_{P'}$ is split.

\begin{remark}  Some authors prefer to describe the local conditions involving valuations in Theorem~\ref{BPThm} in terms of residue forms over the residue fields, rather than extended forms over the henselizations. The reason for this is that the residue fields are in general much easier to work with than the henselizations.

When $A$ is not split, Theorem~\ref{main} can be reformulated as a local-global principle for weak isotropy of hermitian forms over $(D,\gamma)$ (when $\s$ is symplectic) or skew-hermitian forms over $(D,\gamma)$ (when $\s$ is orthogonal). In this way it resembles the statement for quadratic forms (Theorem~\ref{BPThm}) more closely.
In addition, each local condition involving a $D$-admissible valuation can also be stated in terms of residue (skew-)hermitian forms over the residue division algebras using \cite[Theorem~3.4]{Lar}.
\end{remark}

\begin{remark}  The key ingredient used in the proof of Theorem~\ref{main} is the index-two version of what is sometimes called the ``anisotropic splitting conjecture", which says: 
Let $(A, \s)$ be a central simple algebra with orthogonal  
involution over a field $F$, and let $K$ be a generic splitting field
of $A$. If $(A, \s)$ is anisotropic over $F$,  then $(A, \s)$ remains anisotropic over
$K$.

This conjecture is known to be true when the index of $A$ is one (this is trivial since $K/F$ is purely transcendental) or two (by \cite{PSS} or \cite{Dej}), and when $A$ is a division algebra (by \cite{Kar}). 
\end{remark}

\section{Algebras of Pythagorean Index at Most $2$}\label{pindcase}

Let $A$ be a central simple $F$-algebra and let $\widetilde{F}$ denote the pythagorean closure of $F$.
The \emph{pythagorean index} of $A$, $\pind(A)$, is defined by Becher \cite[\S4]{Bec} as follows:
\[\pind(A):=\ind(A\ox_F \widetilde{F}),\]
where $\ind$ denotes the (Schur-) index. For example, let $A=(a,b)_F\ox_F (c,d)_F$ be a biquaternion division algebra over $F$, where $a$ is a sum of squares in $F$, but not a square. Then $\ind(A)=4$, but $\pind(A)\leq 2$. 

The following lemma can be deduced immediately from \cite[Lemma~3.9]{LSU}:

\begin{lemma}\label{pythred} Let $(A,\s)$ be a central simple algebra with involution of the first kind over $F$. Then $(A\ox_F \widetilde{F}, \s\ox\id_{\widetilde{F}})$ is weakly isotropic if and only if $(A,\s)$ is weakly isotropic.
\end{lemma}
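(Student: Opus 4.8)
The plan is to reduce Lemma~\ref{pythred} to an already-available statement in \cite{LSU}. First I would observe that weak isotropy of $(A,\s)$, by the definition in Section~\ref{prelim} and by \eqref{wiso}, is the condition that $\s(x_1)x_1+\cdots+\s(x_n)x_n=0$ admits a nonzero solution in $A$ for some $n$. This is precisely the negation of strong anisotropy, and strong anisotropy is the notion governed by \cite[Definition~2.2]{LSU}. So I would phrase the lemma as: $(A,\s)$ is strongly anisotropic if and only if $(A\ox_F\widetilde F,\s\ox\id_{\widetilde F})$ is strongly anisotropic.

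Next I would invoke \cite[Lemma~3.9]{LSU}. That lemma presumably characterizes strong anisotropy of $(A,\s)$ in terms of a suitable invariant — most plausibly in terms of the involution trace form $T_\s$ being strongly anisotropic as a quadratic form over $F$, or in terms of signatures at orderings — in any case in a way that commutes with the scalar extension $F\hookrightarrow\widetilde F$. The key point to extract is that the relevant datum (whether it is $T_\s$ or the family $(\sig_P\s)_P$) is insensitive to passing from $F$ to its pythagorean closure: the pythagorean closure is obtained by adjoining square roots of sums of squares, which does not kill any ordering of $F$ (every ordering of $F$ extends, uniquely, to $\widetilde F$) and does not change signatures of quadratic forms. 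Concretely, $T_{\s\ox\id_{\widetilde F}}=T_\s\ox\widetilde F$ by the behaviour of the reduced trace under scalar extension, and a quadratic form over $F$ is strongly anisotropic if and only if it has nonzero signature at some ordering of $F$ (a form is weakly isotropic iff it is indefinite at some ordering — this is the classical characterization of strongly anisotropic forms), which is exactly preserved under $F\hookrightarrow\widetilde F$.

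So the main steps, in order, are: (1) translate ``not weakly isotropic'' into ``strongly anisotropic'' using \eqref{wiso}; (2) apply \cite[Lemma~3.9]{LSU} to replace strong anisotropy of the algebra with involution by the corresponding statement about its invariant (trace form / signatures); (3) note that this invariant is compatible with scalar extension to $\widetilde F$ and that orderings and signatures survive this extension, so the invariant being nontrivial over $F$ is equivalent to it being nontrivial over $\widetilde F$; (4) run step (2) in reverse over $\widetilde F$ to conclude. One direction is in fact trivial and needs no machinery: if $(A,\s)$ is weakly isotropic over $F$, the same relation \eqref{wiso} witnesses weak isotropy over $\widetilde F$, so only the converse — descent of weak isotropy from $\widetilde F$ to $F$ — requires \cite[Lemma~3.9]{LSU}.

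I do not expect a serious obstacle here, since the excerpt explicitly says the lemma ``can be deduced immediately'' from \cite[Lemma~3.9]{LSU}; the only real content is matching up the terminology (weak isotropy versus strong anisotropy) and confirming that whatever invariant \cite[Lemma~3.9]{LSU} uses is stable under the pythagorean closure. If that invariant happens to be stated in terms of real valuations as well as orderings, the mild extra point to check is that the set of real places relevant to $(A,\s)$ behaves well under $F\hookrightarrow\widetilde F$ — but in the context of the Br\"ocker--Prestel circle of ideas this is standard, since $\widetilde F$ is SAP-like enough that the valuation-theoretic obstructions collapse. So the expected ``hard part'' is essentially bookkeeping: making sure the equivalence in \cite[Lemma~3.9]{LSU} is applied on both sides of the scalar extension with the same invariant.
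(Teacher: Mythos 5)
The paper itself provides no proof for Lemma~\ref{pythred}; it merely records that it ``can be deduced immediately from \cite[Lemma~3.9]{LSU}.'' You correctly single out the trivial direction (a relation \eqref{wiso} over $F$ remains one over $\widetilde{F}$) and correctly point to \cite[Lemma~3.9]{LSU} as the engine for the descent, so at the level of \emph{which reference to cite} your plan agrees with the paper.

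The problem is the content you attribute to that reference. You propose that \cite[Lemma~3.9]{LSU} reduces strong anisotropy of $(A,\s)$ to a signature condition, and you support this with the claim that ``a quadratic form over $F$ is strongly anisotropic if and only if it has nonzero signature at some ordering,'' glossed as ``a form is weakly isotropic iff it is indefinite at some ordering.'' Both statements are false, and they are not even each other's contrapositive. The form $\<1,1,-1\>$ over $\Q$ has signature~$1$ yet is isotropic; and, more to the point, a form can be indefinite at every ordering of $F$ and still be strongly anisotropic --- that failure is precisely why real valuations appear in Theorem~\ref{BPThm} at all, and why the SAP hypothesis in Section~\ref{SAPcase} is a genuine restriction. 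If weak isotropy were detectable by signatures alone, this entire paper would be moot. So \cite[Lemma~3.9]{LSU} cannot be a signature characterization; it must be a direct descent statement for strong anisotropy along the pythagorean tower, established at each quadratic step $F\subset F(\sqrt{d})$ with $d$ a sum of squares by using that $\<1,-d\>$ is a torsion form together with a transfer or isotropy-over-quadratic-extension argument adapted to the involution setting. That step is the real mathematical content here, which your proposal labels ``essentially bookkeeping'' but does not supply, and which the incorrect signature characterization cannot replace.
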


\begin{prop}\label{pind} Let $A$ be a central simple $F$-algebra of pythagorean index $\pind(A)\leq 2$, equipped with an involution $\s$ of the first kind. Then $(A,\s)$ is weakly isotropic over $F$ if and only if  the following conditions are satisfied:
\begin{enumerate}[\rm(i)]
\item\label{lloc1}  $(A\ox_F F_v^H, \s\ox\id_{F_v^H})$ is weakly isotropic for all real valuations $v$ on $F$;
\item\label{lloc2} $(A\ox_F F_P, \s\ox\id_{F_P})$ is isotropic for all orderings $P$ on $F$.
\end{enumerate}
\end{prop}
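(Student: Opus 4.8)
The plan is to reduce Proposition~\ref{pind} to Theorem~\ref{main} by passing to the pythagorean closure $\widetilde{F}$, where the algebra acquires index at most $2$, and then transferring the local conditions back and forth between $F$ and $\widetilde{F}$. The only genuine direction is the nontrivial one: assuming the two local conditions hold over $F$, we must show $(A,\s)$ is weakly isotropic over $F$. By Lemma~\ref{pythred} it suffices to show that $(A\ox_F\widetilde{F},\s\ox\id_{\widetilde{F}})$ is weakly isotropic, and since $\pind(A)\leq 2$ the algebra $A\ox_F\widetilde{F}$ has index at most $2$, so Theorem~\ref{main} applies over $\widetilde{F}$. Thus it remains to verify the two local conditions of Theorem~\ref{main} \emph{over $\widetilde{F}$}, namely weak isotropy over $(\widetilde{F})_w^H$ for every real valuation $w$ on $\widetilde{F}$, and isotropy over $(\widetilde{F})_Q$ for every ordering $Q$ on $\widetilde{F}$.

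For the ordering part this is essentially immediate: $\widetilde{F}$ and $F$ have canonically identified spaces of orderings (the pythagorean closure adjoins only square roots of sums of squares, which are positive at every ordering), and for an ordering $Q$ on $\widetilde{F}$ restricting to $P$ on $F$ the real closures satisfy $(\widetilde{F})_Q = F_P$, so isotropy over $(\widetilde{F})_Q$ is exactly condition~\eqref{lloc2} at $P$. For the valuation part, given a real valuation $w$ on $\widetilde{F}$, let $v$ be its restriction to $F$; then $v$ is a real valuation on $F$ (its residue field contains the formally real residue field of... is real because $\widetilde{F}/F$ induces an algebraic, hence real-to-real, residue extension), and one has $F_v^H \subseteq (\widetilde{F})_w^H$. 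Since $(A\ox_F F_v^H,\s\ox\id)$ is weakly isotropic by hypothesis~\eqref{lloc1}, scalar extension to $(\widetilde{F})_w^H$ preserves this (weak isotropy is stable under scalar extension, by a commutative diagram argument exactly as in the proof of Theorem~\ref{main}), giving condition~\eqref{loc1} over $\widetilde{F}$. Hence Theorem~\ref{main} yields weak isotropy of $(A\ox_F\widetilde{F},\s\ox\id_{\widetilde{F}})$, and Lemma~\ref{pythred} brings us back to $F$.

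The main obstacle is the valuation-theoretic bookkeeping in the previous paragraph: one must be sure that restricting a real valuation on $\widetilde{F}$ to $F$ again gives a real valuation, and that every relevant henselization over $\widetilde{F}$ sits above the corresponding henselization over $F$ so that the hypothesis can be fed in. This is where one invokes that the pythagorean closure is built by a directed union of odd-degree... rather, by successive adjunctions of square roots of sums of squares, keeping the residue field formally real at each real place; the henselization commutes with such algebraic extensions in the expected way. None of the algebra-with-involution content changes here — all the work is in Theorem~\ref{main} and Lemma~\ref{pythred}, already established — so the proof is short once the valuation compatibilities are pinned down.
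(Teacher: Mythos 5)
Your proof is correct and follows essentially the same route as the paper: pass to the pythagorean closure $\widetilde{F}$ where the index drops to at most $2$, transfer the local conditions from $F$ to $\widetilde{F}$ (via $F_v^H \subseteq \widetilde{F}_w^H$ and $\widetilde{F}_Q = F_P$), apply Theorem~\ref{main} over $\widetilde{F}$, and conclude by Lemma~\ref{pythred}. The paper is slightly more terse about the valuation bookkeeping (it simply says ``we can assume that $F_v^H\subseteq \widetilde{F}_w^H$''), but the substance is identical.
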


\begin{proof} The non-trivial direction can be obtained as follows: assume that the local conditions \eqref{lloc1} and \eqref{lloc2} are satisfied. Then they are satisfied for $(A\ox_F \widetilde{F}, \s\ox\id_{\widetilde{F}})$ since we can assume that $F_v^H\subseteq \widetilde{F}_w^H$ for every valuation $w$ on $\widetilde{F}$, where $v$ is the restriction of $w$ to $F$, and that $\widetilde{F}_P=F_{P_0}$ where $P_0=P\cap F$ for every ordering $P$ on $\widetilde{F}$.

Since by assumption $\ind(A\ox_F \widetilde{F}, \s\ox\id_{\widetilde{F}})\leq 2$ it follows from Theorem~\ref{main} that $(A\ox_F \widetilde{F}, \s\ox\id_{\widetilde{F}})$ is weakly isotropic. We may conclude by Lemma~\ref{pythred}.
\end{proof}

\section{Algebras over SAP Fields}\label{SAPcase} 

Let $F$ be a field, satisfying the \emph{Strong Approximation Property}, or SAP field for short. There are many ways of characterizing SAP fields (see \cite[Definition~3.1]{LSU} for an overview of all the equivalent definitions). For example, $F$ is SAP if and only if  for all $a,b\in F^\x$ the quadratic form $\<1,a,b,-ab\>$
is weakly isotropic.

\begin{prop} Let $A$ be a  central simple algebra equipped with an involution $\s$ of the first kind  over a SAP field $F$. 
Then $(A,\s)$ is weakly isotropic over $F$ if and only if  the following conditions are satisfied:
\begin{enumerate}[\rm(i)]
\item\label{llloc1}  $(A\ox_F F_v^H, \s\ox\id_{F_v^H})$ is weakly isotropic for all real valuations $v$ on $F$;
\item\label{llloc2} $(A\ox_F F_P, \s\ox\id_{F_P})$ is isotropic for all orderings $P$ on $F$.
\end{enumerate}
\end{prop}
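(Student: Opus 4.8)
\subsection*{Proof proposal}

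The ``only if'' direction is immediate, as weak isotropy is preserved under scalar extension. For the converse I would pass to the involution trace form $T_\s$, apply the classical Br\"ocker--Prestel theorem (Theorem~\ref{BPThm}) to it, and then transfer the conclusion back to $(A,\s)$ using that $F$ is SAP.

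The first ingredient is the elementary observation that weak isotropy (resp.\ isotropy) of an algebra with involution forces weak isotropy (resp.\ isotropy) of its involution trace form: applying $\Trd_A$ to a relation $\s(x_1)x_1+\cdots+\s(x_n)x_n=0$ with $(x_1,\ldots,x_n)\ne 0$ shows that $n\x T_\s$ represents $0$ non-trivially, and the case $n=1$ gives the statement for isotropy. Since $T_{\s\ox\id_L}=T_\s\ox L$ for every field extension $L/F$, the local hypotheses \eqref{llloc1} and \eqref{llloc2} imply that $T_\s\ox F_v^H$ is weakly isotropic for every real valuation $v$ on $F$, and that $T_\s\ox F_P$ is isotropic for every ordering $P$ on $F$. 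By Theorem~\ref{BPThm}, $T_\s$ is then weakly isotropic over $F$.

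It remains to deduce that $(A,\s)$ itself is weakly isotropic, and this is the step where the SAP hypothesis is used in an essential way: over an arbitrary field the implication ``$T_\s$ weakly isotropic $\implies (A,\s)$ weakly isotropic'' fails (this is, in essence, why Theorem~\ref{main} is not a formality). I would invoke here the equivalence, valid over a SAP field, between weak isotropy of $(A,\s)$ and weak isotropy of $T_\s$ (see \cite{LSU}). Equivalently, arguing by contraposition: if $(A,\s)$ is strongly anisotropic then so is $T_\s$; since $F$ is SAP, a strongly anisotropic quadratic form is anisotropic over some real closure $F_P$, so $\sig_P\s$ is maximal, and this is exactly the condition that $(A\ox_F F_P,\s\ox\id_{F_P})$ be anisotropic, contradicting \eqref{llloc2}. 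In particular condition \eqref{llloc1} turns out to be redundant for SAP fields; it is kept only to keep the statement parallel to Theorems~\ref{BPThm} and~\ref{main}. The main obstacle is precisely this SAP-specific implication, i.e.\ that a strongly anisotropic $(A,\s)$ over a SAP field has strongly anisotropic trace form; if it is not available from \cite{LSU}, it would have to be proved by relating the strong anisotropy of $(A,\s)$ to that of $T_\s$ over SAP fields, the delicate point being to control strong anisotropy under the scalar extensions one uses to bring the index of $A$ down to at most two, where Theorem~\ref{main} applies.
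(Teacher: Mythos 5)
Your ``only if'' direction and the trace-form observation are correct, and your application of Theorem~\ref{BPThm} to $T_\s$ is valid as far as it goes. But the pivotal step --- that over a SAP field weak isotropy of $T_\s$ implies weak isotropy of $(A,\s)$, equivalently that strong anisotropy of $(A,\s)$ forces strong anisotropy of $T_\s$ --- is not in \cite{LSU}, and you have essentially flagged this yourself. What \cite{LSU} does provide is Theorem~3.8, which is the corresponding ``reduction to a definite ordering'' result \emph{for ED fields}, not SAP fields, and Lemmas~3.9 and 3.10, which concern the pythagorean closure. None of these gives the equivalence you invoke. Moreover, if your claim were correct, condition~\eqref{llloc1} would indeed be redundant over SAP fields, and the SAP Proposition would literally coincide with the ED Proposition~7.1 (recall that over a real closed field weak isotropy and isotropy of $(A_P,\s_P)$ are equivalent). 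The paper deliberately states them as two different results with two different proofs; dropping \eqref{llloc1} over SAP is a genuinely stronger statement that would need its own argument, and the implicit chain ``$(A_P,\s_P)$ isotropic for all $P$ $\Rightarrow$ $(A,\s)$ weakly isotropic'' over a SAP field is exactly the content one is trying to establish --- you cannot assume it.

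The paper's actual route is quite different and avoids this issue entirely: by \cite[Lemma~3.10]{LSU}, over a SAP field every algebra of exponent~$2$ has pythagorean index at most $2$, i.e.\ $A\ox_F\widetilde{F}$ is Brauer-equivalent to a quaternion algebra. One then applies Proposition~\ref{pind} (the pythagorean-index-$\le 2$ case), which in turn reduces, via Lemma~\ref{pythred}, to Theorem~\ref{main} over $\widetilde{F}$. Theorem~\ref{main} is where the real work (Jacobson's trace map in the symplectic case, generic splitting in the orthogonal case) is done, and it is precisely the machinery your approach was hoping to bypass. To repair your argument you would either have to prove the missing ``$T_\s$ weakly isotropic $\Rightarrow (A,\s)$ weakly isotropic over SAP fields'' implication --- for which I see no shortcut and which, if true, would be a nontrivial strengthening of the paper's result --- or fall back on the pythagorean-closure reduction as the paper does.
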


\begin{proof} Let $\widetilde{F}$ be the pythagorean closure of $F$, as before. Then $A\ox_F \widetilde{F}$  is Brauer equivalent  to a quaternion division algebra $(-1,b)_{\widetilde{F}}$ for some $b\in \widetilde{F}^\x$, by \cite[Lemma~3.10]{LSU}. Hence, $\pind(A)\leq 2$ and we can conclude by Proposition~\ref{pind}.
\end{proof}

\begin{remark} In the proof above we use the fact that every algebra of exponent~$2$ over a SAP field $F$ has pythagorean index at most two. 
Karim Becher has kindly communicated to us that the converse also holds, resulting in yet another characterization of SAP fields: $F$ is SAP if and only if $\pind(A)\leq 2$ for any algebra $A$ of exponent $2$ over $F$.
\end{remark}

\section{Algebras over ED Fields.}\label{EDcase}  

Let $F$ be a field, satisfying the \emph{Effective Diagonalization Property}, or ED field for short. Again, there are several different ways of characterizing ED fields, but we will just use the original definition due to Ware \cite[\S2]{W}: a field $F$ is ED if and only
if every quadratic form over $F$ is effectively diagonalizable, i.e., every quadratic form $\vf$ over $F$ is isometric to a form $\<b_1,\ldots,b_n\>$ satisfying
$b_i\in P \implies b_{i+1}\in P$ for all $1\leq i <n$ and all orderings $P$ on $F$.

For algebras with involution of the first kind over ED fields, Theorem~\ref{main} reduces to the following simpler statement, which is a reformulation of the main theorem in \cite{LSU}.

\begin{prop}Let $F$ be an ED field and let $(A,\s)$ be a central simple algebra with involution of the first kind over $F$.
Then
$(A,\s)$ is weakly isotropic if and only if $(A\ox_F F_P, \s\ox\id_{F_P})$ is weakly isotropic for every ordering $P$ on $F$.
\end{prop}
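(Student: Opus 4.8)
The plan is to prove the equivalence by comparing two characterizations of weak isotropy over an ED field. The forward direction is immediate: if $(A,\s)$ is weakly isotropic, then for any ordering $P$ the relation \eqref{wiso} persists after scalar extension to $F_P$, so $(A\ox_F F_P,\s\ox\id_{F_P})$ is weakly isotropic. Hence all the work is in the converse. So assume $(A\ox_F F_P,\s\ox\id_{F_P})$ is weakly isotropic for every ordering $P$, and suppose for contradiction that $(A,\s)$ is strongly anisotropic over $F$.

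The key point is that over an ED field the local conditions at henselizations in Theorem~\ref{main} (or rather the appropriate general statement) can be dispensed with, because ED implies SAP, so the Proposition of Section~\ref{SAPcase} applies: $(A,\s)$ is weakly isotropic iff conditions \eqref{llloc1} and \eqref{llloc2} hold. Thus it suffices to show that, over an ED field, the hypothesis ``$(A\ox_F F_P,\s\ox\id_{F_P})$ is weakly isotropic for all $P$'' forces ``$(A\ox_F F_v^H,\s\ox\id_{F_v^H})$ is weakly isotropic for all real valuations $v$'' and ``$(A\ox_F F_P,\s\ox\id_{F_P})$ is isotropic for all $P$''. The second of these is the crux: over a real closed field $F_P$, the algebra $A\ox_F F_P$ is either split, or a matrix algebra over $(-1,-1)_{F_P}$, and in either case weak isotropy of an algebra with involution of the first kind over a real closed field coincides with isotropy — a form over a real closed field that becomes isotropic after scaling by $n$ is already isotropic, since signatures over $F_P$ are just the number of positive entries minus negative entries and multiplying by $n$ cannot change the sign pattern. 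Translating via \eqref{wiso2} and Remark~\ref{remjac}, $n\x h$ isotropic over the (division or split) residue algebra forces $h$ isotropic. So \eqref{llloc2} follows from the hypothesis for free.

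For the valuation conditions, the idea is to invoke the ED hypothesis in the form: the trace form $T_\s$ (or the relevant quadratic form attached to $h$) is effectively diagonalizable, so that its behaviour at real valuations is controlled by its behaviour at orderings. Concretely, one uses the known fact (from \cite{LSU}, which the stated Proposition is said to be a reformulation of) that over an ED field weak isotropy of $(A,\s)$ is detected purely by the real closures; the cleanest route is therefore to cite that main theorem of \cite{LSU} directly and observe that the present statement is exactly its reformulation in our terminology, using \eqref{wiso2} to pass between algebras with involution and (skew-)hermitian forms. If instead one wants a self-contained argument, one combines the SAP Proposition with the ED property of $T_\s$: the henselization condition \eqref{llloc1} at a real valuation $v$ reduces, via effective diagonalization of the relevant residue form, to a statement about the ordered residue field, which in turn is governed by the orderings of $F$ extending $v$, and hence by the hypothesis.

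The main obstacle I anticipate is the valuation step: showing rigorously that, over an ED field, weak isotropy at henselizations is implied by weak isotropy at orderings. This is precisely the content that makes ED fields special (it fails for general SAP fields), and it rests on the Ware characterization together with the structure theory of \cite{LSU}. I expect the honest proof to be a two-line deduction — ``this is \cite[Theorem~X]{LSU} rephrased via \eqref{wiso2}'' — with the real mathematical content having been absorbed into the cited result; the only genuine checking needed is that the dictionary between $(A,\s)$-isotropy and $h$-isotropy (Remark~\ref{remjac}, \eqref{wiso2}) is compatible with scalar extension to $F_P$, which is routine.
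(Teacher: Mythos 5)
Your route differs from the paper's, and the crucial step in your plan is left as a gap. The paper proves the nontrivial direction by contraposition, in two short steps with no detour through valuations at all: if $(A,\s)$ is strongly anisotropic, then \cite[Theorem~3.8]{LSU} (this is the point where the ED hypothesis enters) provides an ordering $P$ of $F$ at which $(A_P,\s_P):=(A\ox_F F_P,\s\ox\id_{F_P})$ is \emph{definite}; hence $T_{\s_P}$ is definite and strongly anisotropic over $F_P$, and a relation $\sum_{i}\s_P(x_i)x_i=0$ with nonzero $x_i$ would, after applying $\Trd$, make $T_{\s_P}$ weakly isotropic, a contradiction. So $(A_P,\s_P)$ is strongly anisotropic, which is the contrapositive of the claim.

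You instead propose to route through the SAP Proposition of Section~\ref{SAPcase}, which forces you to derive \emph{both} local conditions from the hypothesis: isotropy over every $F_P$ and weak isotropy over every $F_v^H$. Your argument for the ordering condition is fine (over a real closed field weak isotropy and isotropy coincide, as you explain). But the henselization condition is precisely where the ED hypothesis has to do real work, and your sketch --- ``effective diagonalization of the relevant residue form \ldots\ governed by the orderings of $F$ extending $v$'' --- restates the goal without giving an argument; you correctly flag this as the obstacle but do not resolve it. Your fallback, ``cite the main theorem of \cite{LSU} directly,'' is in the right spirit (the paper itself describes the Proposition as a reformulation of that theorem), but a proof has to identify the cited result and verify the translation; the specific input needed is \cite[Theorem~3.8]{LSU} on the existence of a definite ordering for a strongly anisotropic involution over an ED field, together with the reduced-trace argument, and your text contains neither. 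Net effect: the forward direction and the $F_P$-level observation are correct, but the proposal does not constitute a proof of the converse.
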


\begin{proof} We show the non-trivial direction by contraposition.  
Assume that $(A, \s)$ is strongly anisotropic.
By \cite[Theorem~3.8]{LSU}  there exists an ordering $P$ of $F$ such that   $(A_P,\s_P):=(A\ox_F F_P,\s\ox \id_{F_P})$ is definite.

Since $\sig_P\s_P:=\sqrt{\sig_P T_{\s_P}}$, $T_{\s_P}$ is definite and thus strongly anisotropic over $F_P$. To show that $(A_P,\s_P)$ is strongly anisotropic, assume the opposite 
for the sake of contradiction.  Then we can find nonzero $x_1,\ldots, x_\ell \in A_P$ such that $\sum_{i=1}^\ell \s_P(x_i)x_i=0$. Taking the reduced trace of both sides gives $T_{\s_P}$ weakly isotropic, a contradiction.
\end{proof}

For quadratic forms a similar phenomenon happens in the presence of SAP fields: the Br\"ocker--Prestel theorem, Theorem~\ref{BPThm}, collapses to a theorem of Prestel stating that a quadratic form over a SAP field $F$ is weakly isotropic if and only if it is  isotropic over all real closures of $F$, cf. \cite[Theorem~9.1]{P}, \cite{Pres} and \cite{ELP}.

\end{document}